\documentclass[12pt,oneside]{amsart}
\usepackage[margin=1in]{geometry}
\usepackage{amsmath}
\usepackage{amsthm}
\usepackage{amsfonts}
\usepackage{amssymb}
\usepackage[hidelinks]{hyperref}
\usepackage{bm}
\usepackage{tikz}
\usetikzlibrary{cd}
\usetikzlibrary{decorations.markings}
\tikzset{degil/.style={
            decoration={markings,
            mark= at position 0.5 with {
                  \node[transform shape] (tempnode) {$\backslash$};
                  }
              },
              postaction={decorate}
            }
}

\numberwithin{equation}{section}
\numberwithin{figure}{section}

\theoremstyle{plain}
\newtheorem{thm}[equation]{Theorem}
\newtheorem*{FundClaim*}{Fundamental Claim}
\newtheorem{lemma}[equation]{Lemma}

\theoremstyle{definition}

\newtheorem{remark}[equation]{Remark}

\newcommand{\Q}{\ensuremath \mathbb{Q}}
\newcommand{\R}{\ensuremath \mathbb{R}}

\newcommand{\Z}{\ensuremath \mathbb{Z}}
\newcommand{\N}{\ensuremath \mathbb{N}}

\begin{document}

\title[The $\Z$-Divisor Graph on ${\Q\cap[1,7]}$ is Planar and 3-Colorable]{The $\Z$-Divisor Graph on $\Q\cap[1,7]$ is Planar and 3-Colorable}

\author{Michael D.\ Walton, Zachary D.\ Walton}
\address{Department of Mathematics, Baylor University, Waco, TX 76706, USA}
\email{michael\_walton2@baylor.edu}

\keywords{Divisor Graph, Planar Graph}

\begin{abstract}
    We prove that a divisor graph on an interval of integers is a planar graph whenever the interval $[n,m]$ satisfies $m\leq 7n$, and that $7$ is the greatest such real number for which we can guarantee that the graph is planar. We prove these graphs are 3-colorable.
\end{abstract}

\maketitle

\section{Introduction}\label{Section:Introduction}

A \emph{divisor graph} $G(S)$ of a subset $S\subseteq \Z$ is the simple graph $(V,E)$ where $V=S$ and $uv\in E$ if and only if $u\neq v$ and either $u|v$ or $v|u$. Divisor graphs were formally introduced by Santhosh and Singh in 2000 \cite{Singh_Santhosh_preprint}, and have been discussed further in several other papers, such as \cite{Singh_Santhosh} and \cite{Chartrand_etal}. Other papers, such as \cite{Maan_Sehgal_Malik1} and \cite{Maan_Sehgal_Malik2}, have looked at divisor graphs over polynomial rings.

In this paper, we consider divisor graphs of the form $G(\{n, n+1, n+2, \ldots, m-1,m\})$ for $n,m\in \Z_{>0}$, denoted hereafter as $G[n,m]$. We consider when $G[n,m]$ is a planar graph. For example, $G[1,14]$ is a planar graph, but $G[1,15]$ is non-planar as it contains $K_{3,3}$ as a minor, as shown below.

\begin{center}
\begin{tikzpicture}[scale=1, every node/.style={circle, draw, minimum size=5mm}]
\node (v1) at (0,-1) {1};
\node (v2) at (0,3) {2};
\node (v3) at (-2,-1) {3};
\node (v4) at (3,0) {4};
\node (v5) at (-0.9,0.5) {5};
\node (v6) at (-3,0) {6};
\node (v7) at (0.9,0.5) {7};
\node (v8) at (2,0.75) {8};
\node (v9) at (0,-2) {9};
\node (v10) at (-0.9,1.5) {10};
\node (v11) at (1,-2) {11};
\node (v12) at (4,-1) {12};
\node (v13) at (2,-1.75) {13};
\node (v14) at (0.9,1.5) {14};

\draw (v1) -- (v2);
\draw (v1) -- (v3);
\draw (v1) -- (v4);
\draw (v1) -- (v5);
\draw (v1) -- (v6);
\draw (v1) -- (v7);
\draw (v1) -- (v8);
\draw (v1) -- (v9);
\draw[bend right=10] (v1) to (v10);
\draw (v1) -- (v11);
\draw (v1) -- (v12);
\draw (v1) -- (v13);
\draw[bend left=10] (v1) to (v14);

\draw[bend left=20] (v2) to (v4);
\draw[bend right=10] (v2) to (v6);
\draw[bend left=10] (v2) to (v8);
\draw (v2) -- (v10);
\draw[bend left=30] (v2) to (v12);
\draw (v2) -- (v14);

\draw (v3) -- (v6);
\draw (v3) -- (v9);
\draw[bend right=54] (v3) to (v12);

\draw (v4) -- (v8);
\draw (v4) -- (v12);

\draw (v5) -- (v10);

\draw[bend right=80] (v6) to (v12);

\draw (v7) -- (v14);

\node (b1) at (5,2.1) {1};
\node (b2) at (8,2.1) {2};
\node (b3) at (11,2.1) {3};
\node (b6) at (5,-2.1) {6};
\node (b12) at (8,-2.1) {12};
\node (b15) at (11,-2.1) {15};
\node (b10) at (8.6,1.26) {10};
\node (b5) at (10,-0.7) {5};

\draw (b1) -- (b6);
\draw (b1) -- (b12);
\draw (b1) -- (b15);
\draw (b2) -- (b6);
\draw (b2) -- (b12);
\draw (b2) -- (b10);
\draw (b10) -- (b5);
\draw (b5) -- (b15);
\draw (b3) -- (b6);
\draw (b3) -- (b12);
\draw (b3) -- (b15);
\end{tikzpicture}
\end{center}

In this paper, we consider the ratio $m/n$ where $m$ is the greatest integer for which $G[n,m]$ is planar, and we prove that the infimum of all such rational numbers $m/n$ is 7. To that end, we define the \emph{rational divisor graph} $Q(S)$ of a subset $S\subseteq \Q$ to be the simple graph $(V,E)$ where $V=S$ and $uv\in E$ if and only if $u\neq v$ and either $u=nv$ or $v=nu$ for some $n\in \N=\{1, 2, 3, \ldots\}$. If $S = [a,b]\cap\Q$ or $S=(a,b)\cap \Q$, then we denote $Q(S)$ as $Q[a,b]$ and $Q(a,b)$, respectively. Note that if we rename every vertex $k$ of a graph $G[n,m]$ to be $k/n$, then we can naturally associate the graph $G[n,m]$ to a subgraph of the rational divisor graph $Q[1,m/n]$. We will prove that $Q[1,7]$ is planar, but $Q[1,7+\epsilon)$ is non-planar for every $\epsilon>0$. We will further prove that $Q[1,7]$ is 3-colorable.

\section{Results}

We first prove a brief lemma.

\begin{lemma}\label{Lem:Max_Divisor}
    If $q_1q_2$ is an edge in $Q[1,r]$ with $q_1<q_2$, then $q_2 = kq_1$ for some $k\in \Z\cap[1,r]$.
\end{lemma}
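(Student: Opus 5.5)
The plan is to unwind the definition of an edge in $Q[1,r]$ and then bound the multiplier using the endpoints of the interval. By definition, $q_1q_2$ is an edge exactly when $q_1\neq q_2$ and one of the two vertices is a positive integer multiple of the other. So the first step is to decide which of the two relations, $q_2=kq_1$ or $q_1=kq_2$, can hold under the hypothesis $q_1<q_2$.

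For this, note that every vertex of $Q[1,r]$ lies in $[1,r]$, so in particular $q_1,q_2>0$. If $q_1=kq_2$ for some $k\in\N$, then $k\geq 1$ and $q_2>0$ give $q_1\geq q_2$, which contradicts $q_1<q_2$. Hence we must have $q_2=kq_1$ for some $k\in\N$. Since $q_1<q_2$ we cannot have $k=1$, so in fact $k\geq 2$, although the statement only needs $k\geq 1$.

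It remains to show that $k\le r$. Because $q_2\le r$ and $q_1\ge 1$, we get
\[
k=\frac{q_2}{q_1}\le \frac{r}{1}=r .
\]
Thus $k\in\Z\cap[1,r]$, which is the claim.

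I do not expect a real obstacle here. The only point that needs care is the positivity of the vertices, which is what rules out the reversed relation $q_1=kq_2$; this follows at once because the interval starts at $1$.
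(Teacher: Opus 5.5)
Your proof is correct and is essentially the paper's argument. The paper argues by contrapositive (if $k>r$ then $q_2=kq_1\geq k>r$), while you bound $k=q_2/q_1\leq r$ directly; you also make explicit why the relation must be $q_2=kq_1$ rather than $q_1=kq_2$, a point the paper leaves implicit.
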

\begin{proof}
    We prove the contrapositive. If $q_2=kq_1$ with $k>r$, then as $1\leq q_1$, $r<k\leq kq_1=q_2$, so $q_2$ is not a vertex in $Q[1,r]$.
\end{proof}

As a result of Lemma \ref{Lem:Max_Divisor}, two vertices $q_1<q_2$ in $Q[1,7]$ are connected if and only if $q_2=kq_1$ for $k\in\{2,3,4,5,6,7\}$. We will first prove that $Q[1,7)$ is planar and 3-colorable.

\begin{thm}
    $Q[1,7)$ is planar and 3-colorable.
\end{thm}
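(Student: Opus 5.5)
The plan is to split $Q[1,7)$ into connected components, give one explicit $3$-coloring that works on every component at once, and prove planarity component by component through a finite case analysis. By Lemma \ref{Lem:Max_Divisor}, two vertices $q<q'$ of $Q[1,7)$ are adjacent exactly when $q'=kq$ with $k\in\{2,3,4,5,6\}$; the multiplier $7$ cannot occur, since $7q\geq 7$. Every such $k$ is a product of the primes $2,3,5$. Hence every path stays inside a single class of the equivalence relation $q\sim q'$ if and only if $q'/q=2^a3^b5^c$ with $a,b,c\in\Z$. A graph is planar (respectively $3$-colorable) precisely when each of its connected components is. So it suffices to treat one class $C$ at a time.

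\emph{Coloring.} Fix a representative $q_0\in C$. By unique factorization in $\Q^\times$, each $q\in C$ can be written uniquely as $q=q_0 2^{a}3^{b}5^{c}$. Define the color of $q$ to be $a+b+c \bmod 3$. The five possible edges change the exponent vector $(a,b,c)$ by $(1,0,0)$, $(0,1,0)$, $(2,0,0)$, $(0,0,1)$ or $(1,1,0)$. These change $a+b+c$ by $1,1,2,1,2$, and none of these is $\equiv 0 \pmod 3$. So adjacent vertices always receive different colors, and $Q[1,7)$ is $3$-colorable. This step is routine.

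\emph{Planarity.} Write $C=\{x\,m : m\in M_x\}$, where $x=\min C$ (the minimum exists because only finitely many elements of a class lie in $[1,7)$) and $M_x$ is the set of numbers $2^a3^b5^c$ in $[1,7/x)$. The component is then the graph on $M_x$ in which $m$ is joined to $km$ for $k\in\{2,3,4,5,6\}$. Two constraints keep this small.
\begin{itemize}
\item Every element of $M_x$ lies in $[1,7)$, because $7/x\leq 7$.
\item Since $25>7$, the exponent of $5$ takes at most two consecutive values.
\end{itemize}
Moreover, each component graph on $M_x$ is a subgraph of the graph obtained from the largest window $[1,7)$ restricted to the monomials $2^a3^b5^c$ in $[1,7)$ with $a,b,c$ allowing negative values. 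Note that negative exponents do occur, e.g. $3/2$, $5/3$, $4/3$. This family is finite but not tiny. I would list it by sweeping the window $[1,7)$ and recording the finitely many monomials whose ratio to the minimum is below $7$. Then I would check that the union graph is planar, or else split it into the few maximal configurations that can actually occur as $x$ varies.

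The main obstacle is this last step: producing the list of maximal component types and an explicit plane drawing of each. I would organize the drawing in layers, placing the $5$-free part (a grid in $(a,b)$) in the plane and attaching the $\times 5$ layer on the outside. I would then verify by hand that the diagonal edges ($\times 4$ and $\times 6$) can be routed without crossings. If a uniform drawing fails, I would fall back on enumerating the windows by their breakpoints and checking each resulting finite graph directly, for instance by exhibiting an embedding.
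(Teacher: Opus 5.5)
Your $3$-coloring is correct, and it is the same coloring the paper uses: the periodic pattern drawn there on the triangular lattice is exactly $a+b+c \bmod 3$ in exponent coordinates. The planarity half, however, rests on a false finiteness claim.

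A class meets $[1,7)$ in infinitely many points. Since $\log_2 3$ is irrational, the numbers $2^a3^b$ are dense in $(0,\infty)$; this is what Lemma~\ref{Lem:Irrational_Dense} is used for later. Worse, a single connected component is already infinite. Multiplying by $2$ below $3$ and dividing by $3$ at or above $3$ gives the path $1,2,4,\tfrac43,\tfrac83,\tfrac{16}{3},\tfrac{16}{9},\tfrac{32}{9},\tfrac{32}{27},\dots$ in $Q[1,7)$. It stays in $[1,6)$, and the exponent of $3$ decreases without bound.

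Your bound on the exponent of $5$ also fails. The path $1,5,\tfrac54,\tfrac{25}{4},\tfrac{25}{16}$ lies in $Q[1,7)$, and along it that exponent takes the values $0,1,2$. The inequality $25>7$ only forbids $m$ and $25m$ from both lying in the window; it says nothing once the $2,3$-parts vary. Consequently $\min C$ need not exist (the class of $7/2$ has infimum $1$, not attained). Your ``family'' of monomials is infinite, and there is no finite list of maximal component types, so no finite enumeration of drawings can prove planarity. Appealing to the fact that a countable graph is planar once all its finite subgraphs are does not rescue this, because those finite subgraphs have unbounded size.

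The missing idea is one explicit embedding of a whole infinite component. The paper collapses your three-dimensional exponent lattice onto the plane along $(1,1,1)$. It places $2^a3^b5^c$ at $a\hat i+b\hat j+c\hat k$, with $\hat i,\hat j,\hat k$ unit vectors at angles $0,2\pi/3,4\pi/3$, so that $\hat i+\hat j+\hat k=0$. Two numbers with the same image differ by a factor $30^n$, and $30>7$, so the map is injective on $[1,7)$. The $\times 2$, $\times 3$ and $\times 5$ edges become unit edges of the triangular lattice, and so does $\times 6$, because $\hat i+\hat j=-\hat k$.

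Only the $\times 4$ edges remain, and the edge from $r$ to $4r$ is the straight segment through $2r$. If $r,4r\in[1,7)$, the third vertices of the lattice triangles bordering this segment can only be $6r$ on one side and $2r/3$ on the other. These cannot both occur, since $6r<7$ forces $2r/3<7/9$, so the edge can be drawn on the empty side.

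Your layered picture, with a $5$-free $(a,b)$-grid and a $\times 5$ layer attached outside, cannot work without some such collapse. Each layer is itself infinite, and the exponent of $5$ is unbounded on a component.
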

\begin{proof}
    Note that if every connected component of a countable graph $G$ is planar (or 3-colorable), then $G$ is planar (or 3-colorable) as we can embed each component into disjoint open discs in the plane. Thus it suffices to prove the component containing $1$ is planar (or 3-colorable) as the proof generalizes to any other component. Denote the subgraph of $Q[1,7)$ consisting only of the component containing $1$ as $Q'[1,7)$.
    
    Note that by Lemma \ref{Lem:Max_Divisor}, for any rational number $q$ in $Q'[1,7)$, there is a finite sequence of rational numbers $1=q_0, q_1, q_2, \ldots, q_n=q$ such that for each $0\leq i<n$, either $q_i/q_{i+1}$ or $q_{i+1}/q_i\in \{2,3,4,5,6\}$. Hence, we may write $q$ as $q=2^a \cdot 3^b \cdot 5^c$ for some $a,b,c\in \Z$.

    Let $\hat{i}, \hat{j}, \hat{k}$ be the unit vectors at angles $\theta = 0, 2\pi/3, 4\pi/3$ to the positive $x$-axis, respectively, and then place the rational number $q=2^a3^b5^c$ at $a\hat{i}+b\hat{j}+c\hat{k}$. Note that in this case, the $(x,y)$-coordinate of $q$ is $\big(a-\frac{1}{2}(b+c), \frac{\sqrt{3}}{2}(b-c)\big)$.
    
    We now prove that no two rational numbers in $Q'[1,7)$ coincide at any point $a\hat{i}+b\hat{j}+c\hat{k} \in \R^2$. Suppose $q,r$ are both rational numbers in $Q'[1,7)$ such that $q=2^{a_q}3^{b_q}5^{c_q}$ and $r=2^{a_r}3^{b_r}5^{c_r}$, and $a_q\hat{i}+b_q\hat{j}+c_q\hat{k} = a_r\hat{i}+b_r\hat{j}+c_r\hat{k}$.

    If $a_q=a_r$, then $\frac{1}{2}(b_q+c_q)=\frac{1}{2}(b_r+c_r)$ and $\frac{\sqrt{3}}{2}(b_q-c_q)=\frac{\sqrt{3}}{2}(b_r-c_r)$. Solving this system, we find $b_q=b_r$ and $c_q=c_r$, so $q=r$.

    If $a_q\neq a_r$, then without loss of generality, let $a_q> a_r$, so $a_q=a_r+n$ for some $n\in \Z_{>0}$. Then $n+\frac{1}{2}(b_q+c_q)=\frac{1}{2}(b_r+c_r)$ and $\frac{1+\sqrt{3}}{2}(b_r-c_r)=\frac{1+\sqrt{3}}{2}(b_r-c_r)$. Solving this system, we find $b_q=b_r-n$ and $c_q=c_r-n$. Thus
    \[
    q=2^{a_q}3^{b_q}5^{c_q} = 2^n\cdot 2^{a_r} \cdot 3^{-n}\cdot 3^{b_r} \cdot 5^{-n} \cdot 5^{c_r} = \left(\frac{2}{15}\right)^n r.
    \]
    But $r\in [1,7)$ and $n\in \Z_{>0}$, so $q = \left(\frac{2}{15}\right)^n r < \left(\frac{2}{15}\right)^1 7 = \frac{14}{15} < 1$, a contradiction as $q\in [1,7)$. Thus if $q,r$ share the same coordinate in $\R^2$, $q=r$.

    Now observe that if $q<r$ are adjacent in $Q'[1,7)$, then $r/q\in \{2,3,4,5,6\}$. Observe that in $\R^2$, then the edges and neighbors of $r$ in $Q'[1,7)$ constitute a subgraph of the following image:

    \begin{center}
        \begin{tikzpicture}[scale=1, every node/.style={circle, draw, minimum size=5mm}]
        \node (r) at (0,0) {$r$};
        \node (2r) at (2,0) {$2r$};
        \node (3r) at (-1,1.73) {$3r$};
        \node (4r) at (4,0) {$4r$};
        \node (5r) at (-1,-1.73) {$5r$, $r/6$};
        \node (6r) at (1,1.73) {$6r$, $r/5$};
        \node (r2) at (-2,0) {$r/2$};
        \node (r3) at (1,-1.73) {$r/3$};
        \node (r4) at (-4,0) {$r/4$};

        \draw (r) -- (2r);
        \draw (r) -- (3r);
        \draw[bend left=30] (r) to (4r);
        \draw (r) -- (5r);
        \draw (r) -- (6r);
        \draw (r) -- (r2);
        \draw (r) -- (r3);
        \draw[bend left=30] (r) to (r4);
        \end{tikzpicture}
    \end{center}

    Thus, ignoring for the moment the edges from $r$ to $4r$ and $r/4$, the graph of $Q'[1,7)$ will be a subgraph of the triangulation of the plane with vertices a distance 1 apart. To prove $Q'[1,7)$ is planar, we must prove that if $r$ and $4r$ are vertices of $Q'[1,7)$, then there is always a path from $r$ to $4r$ which does not cross any other edges in $Q'[1,7)$. Consider now the possible neighbors of $2r$ where $r,2r,4r \in Q'[1,7)$, as illustrated in the following graph. Since $r\in[1,7)$, if $kr \in [1,7)$, then it is clear that $k<7$. Further, if $k<1$ and $kr,4r\in[1,7)$, so $4r/kr = 4/k < 7$, so $4/7 < k$. Thus, while the graph on the left includes several possible multiples of $r$, the graph on the right has dropped the values of $r$ that cannot be in $Q'[1,7)$. In particular, $2r$ connects to neither $r/2$ nor $8r$.

    \begin{center}
        \begin{tikzpicture}[scale=1, every node/.style={circle, draw, minimum size=5mm}]
        \node (r) at (-3,0) {$r$};
        \node (2r) at (-1,0) {$2r$};
        \node (4r) at (1,0) {$4r$};
        \node (6r) at (-2,1.73) {$6r$, $r/5$};
        \node (10r) at (-2,-1.73) {$r/3$};
        \node (12r) at (0,1.73) {$2r/5$};
        \node (20r) at (0,-1.73) {$2r/3$};
        \node (8r) at (3,0) {$8r$};
        \node (r2) at (-5,0) {$r/2$};

        \draw (2r) -- (r);
        \draw (2r) -- (4r);
        \draw (2r) -- (6r);
        \draw (2r) -- (10r);
        \draw (2r) -- (12r);
        \draw (2r) -- (20r);
        \draw[bend left=30] (2r) to (r2);
        \draw[bend left=30] (2r) to (8r);
        
        \node (s) at (5,0) {$r$};
        \node (2s) at (7,0) {$2r$};
        \node (4s) at (9,0) {$4r$};
        \node (6s) at (6,1.73) {$6r$};
        \node (20s) at (8,-1.73) {$2r/3$};

        \draw (2s) -- (s);
        \draw (2s) -- (4s);
        \draw (2s) -- (6s);
        \draw (2s) -- (20s);
        \end{tikzpicture}
    \end{center}

    Thus, in order for $Q'[1,7)$ to be planar, it suffices to prove that either $6r\not\in Q'[1,7)$ or $2r/3 \not\in Q'[1,7)$. Suppose $6r\in Q'[1,7)$. Then $6r<7$, so $r<7/6$. But then $2r/3 < 7/9 < 1$, a contradiction as $r\in [1,7)$. Thus at least one of these edges is not in $Q'[1,7)$, so we can connect $r$ to $4r$ on the side of $2r$ which is missing the edge. Thus $Q'[1,7)$ is planar, and hence $Q[1,7)$ is planar.

    To see that $Q'[1,7)$ is 3-colorable, it suffices to note that the following can easily be extended to a 3-coloring of an infinite graph with colors $\{a,b,c\}$, and that $Q'[1,7)$ is a subgraph of this infinite extension.
    \begin{center}
        \begin{tikzpicture}[scale=1, every node/.style={circle, draw, minimum size=2mm}]
        \node (1r) at (-6,0) {$a$};
        \node (2r) at (-4,0) {$b$};
        \node (3r) at (-2,0) {$c$};
        \node (4r) at  (0,0) {$a$};
        \node (5r) at  (2,0) {$b$};
        \node (6r) at  (4,0) {$c$};
        \node (7r) at  (6,0) {$a$};
        \node (1q) at (-5,1.73) {$c$};
        \node (2q) at (-3,1.73) {$a$};
        \node (3q) at (-1,1.73) {$b$};
        \node (4q) at  (1,1.73) {$c$};
        \node (5q) at  (3,1.73) {$a$};
        \node (6q) at  (5,1.73) {$b$};
        \node (1s) at (-5,-1.73) {$c$};
        \node (2s) at (-3,-1.73) {$a$};
        \node (3s) at (-1,-1.73) {$b$};
        \node (4s) at  (1,-1.73) {$c$};
        \node (5s) at  (3,-1.73) {$a$};
        \node (6s) at  (5,-1.73) {$b$};

        \draw (1q) -- (2q); \draw (2q) -- (3q); \draw (3q) -- (4q); \draw (4q) -- (5q); \draw (5q) -- (6q);
        \draw (1r) -- (2r); \draw (2r) -- (3r); \draw (3r) -- (4r); \draw (4r) -- (5r); \draw (5r) -- (6r); \draw (6r) -- (7r);
        \draw (1s) -- (2s); \draw (2s) -- (3s); \draw (3s) -- (4s); \draw (4s) -- (5s); \draw (5s) -- (6s);
        \draw (1r) -- (1s); \draw (1r) -- (1q);
        \draw (2r) -- (1q); \draw (2r) -- (2q); \draw (2r) -- (1s); \draw (2r) -- (2s);
        \draw (3r) -- (2q); \draw (3r) -- (3q); \draw (3r) -- (2s); \draw (3r) -- (3s);
        \draw (4r) -- (3q); \draw (4r) -- (4q); \draw (4r) -- (3s); \draw (4r) -- (4s);
        \draw (5r) -- (4q); \draw (5r) -- (5q); \draw (5r) -- (4s); \draw (5r) -- (5s);
        \draw (6r) -- (5q); \draw (6r) -- (6q); \draw (6r) -- (5s); \draw (6r) -- (6s);
        \draw (7r) -- (6q); \draw (7r) -- (6s);

        \draw[bend right=20] (1r) to (3r);
        \draw[bend right=20] (3r) to (5r);
        \draw[bend right=20] (5r) to (7r);
        \draw[bend left=20] (2r) to (4r);
        \draw[bend left=20] (4r) to (6r);
        
        \draw[bend right=20] (1s) to (3s);
        \draw[bend right=20] (3s) to (5s);
        \draw[bend left=20] (2s) to (4s);
        \draw[bend left=20] (4s) to (6s);
        
        \draw[bend right=20] (1q) to (3q);
        \draw[bend right=20] (3q) to (5q);
        \draw[bend left=20] (2q) to (4q);
        \draw[bend left=20] (4q) to (6q);
        \end{tikzpicture}
    \end{center}
\end{proof}

\begin{thm}
    $Q[1,7]$ is planar and 3-colorable.
\end{thm}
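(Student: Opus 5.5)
Compared with $Q[1,7)$, the graph $Q[1,7]$ adds the vertex $7$ and the edges $q\,7q$ for $q=1$ only. Indeed, by Lemma \ref{Lem:Max_Divisor}, an edge $q_1q_2$ with $q_1<q_2$ has $q_2=kq_1$ with $k\in\{2,\dots,7\}$. When $k=7$ we get $7\le 7q_1=q_2\le 7$, which forces $q_1=1$ and $q_2=7$. Every other edge is already present in $Q[1,7)$ once $7$ is included as a vertex. The vertex $7$ is adjacent exactly to $1$ and to the rationals $7/k\in[1,7]$ with $k\in\{2,\dots,6\}$, namely $7/2,7/3,7/4,7/5,7/6$. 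So $Q[1,7]$ is $Q[1,7)$ plus one vertex of degree $6$. Every component of $Q[1,7)$ other than those containing $1$ and the $7/k$ is untouched, and I would handle those by the previous theorem.

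I would first note that the numbers $7/k$ all lie in the component of $7$, which consists of rationals $7\cdot 2^a3^b5^c$. That component is disjoint from the component $Q'[1,7)$ of $1$, since no product of powers of $2,3,5$ equals $7$. Thus $Q[1,7]$ is the disjoint union of three kinds of pieces:
\begin{itemize}
\item the components of $Q[1,7)$ not containing $1$ or any $7/k$;
\item $Q'[1,7)$ with the vertex $7$ attached to $1$ by a single edge;
\item the component $C$ of $Q[1,7)$ containing $7/2,\dots,7/6$, together with the vertex $7$.
\end{itemize}
Attaching a pendant edge preserves planarity and 3-colorability, so the second piece is fine. The work is in the third piece.

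For $C\cup\{7\}$ I would reuse the hexagonal-lattice embedding from the previous proof, scaled by $7/6$ or centered at $7/6$, so that $q=7\cdot 2^a3^b5^c$ sits at $a\hat{i}+b\hat{j}+c\hat{k}$ with $7$ at the origin. Injectivity holds by the same $(2/15)^n$ argument, since the ratio of two vertices lies in $[1/7,7]$. The vertex $7$ is then placed at the origin exactly like a vertex $r=7$ of the previous proof, and its neighbours $7/2,7/3,7/4,7/5,7/6$ occupy the positions $r/2,r/3,r/4,r/5,r/6$ of the earlier figure. The previous planarity argument only used that edges are the lattice edges plus the long edges $r\to 4r$, each of which needs a free side of its midpoint $2r$. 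That local argument used only ratios of vertices lying in $(1/7,7)$. I would recheck it with the closed endpoint: for the long edge $7/4\to 7$ the midpoint is $7/2$, and the candidate obstructions are $(7/4)\cdot 6=21/2$ (absent) and $(7/4)\cdot 2/3=7/6$ (present). One side is therefore free. For other long edges $r\to 4r$ with $4r\le 7$, the condition $6r\le 7$ together with $2r/3\ge 1$ is still impossible. For the 3-coloring, $C\cup\{7\}$ is again a subgraph of the lattice-plus-distance-two graph, so the same periodic coloring works.

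The main obstacle is making sure that including the endpoint $7$ does not break the injectivity or the free-side argument. The inequalities there were strict ($q<14/15$, $2r/3<7/9$), so I expect them to survive, but the boundary cases such as $r=7/6$ (giving $6r=7$) need explicit checking. I would try these first.
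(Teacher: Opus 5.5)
\textbf{There is a gap in the final assembly.} Your three pieces are not disjoint: the vertex $7$ lies in the second piece ($Q'[1,7)$ with $7$ hung off $1$) and also in the third ($C\cup\{7\}$). So the component of $1$ in $Q[1,7]$ is $Q'[1,7)$ and $C\cup\{7\}$ joined by the edge from $1$ to $7$. That edge is pendant only when the second piece is viewed in isolation. ``Attaching a pendant edge'' therefore does not finish the job, and after treating the third piece you never combine the two.

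What you need is twofold. First, the edge from $1$ to $7$ is a bridge; your observation that one side consists of numbers $2^a3^b5^c$ and the other of numbers $7\cdot 2^a3^b5^c$ already proves this. Second, joining planar, 3-colorable graphs by a bridge preserves both properties. For coloring this is immediate: permute the colors on one side so that $1$ and $7$ differ. For planarity of these infinite graphs it is not a purely local step. Nothing you have said guarantees that $1$ and $7$ lie on faces of their lattice drawings that reach out to where the other drawing sits. The paper closes this by noting that every edge of $K_5$ and $K_{3,3}$ lies on a cycle, while a bridge lies on none, so the new edge cannot create a Kuratowski minor. Add that step, or an explicit drawing argument, and the proof is complete.

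\textbf{Comparison of routes on the $7$-side.} The paper does not re-examine the lattice. It uses that $r\mapsto 7/r$ is an isomorphism $Q(1,7]\to Q[1,7)$. The side containing $7$ avoids $1$, so it is a subgraph of a copy of $Q[1,7)$, and the previous theorem applies with no boundary cases.

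You instead place $7\cdot 2^a3^b5^c$ at $a\hat{i}+b\hat{j}+c\hat{k}$ and rerun the argument. Your checks are right:
\begin{itemize}
\item ratios of vertices lie in $[1/7,7]$;
\item the only non-unit edges have ratio $4$;
\item $6r\le 7$ and $2r/3\ge 1$ are incompatible;
\item for $7/4\to 7$ the obstruction $21/2$ is absent.
\end{itemize}
This costs more verification but is self-contained. It also makes unnecessary your unproved assumption that $7/2,\dots,7/6$ lie in a single component of $Q[1,7)$, since the embedding handles all vertices of the form $7\cdot 2^a3^b5^c$ at once.

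One correction: because $\hat{i}+\hat{j}+\hat{k}=0$, coinciding positions force $q=30^n r$, not $q=(2/15)^n r$ as computed in the earlier proof. Your bound on ratios rules out both.
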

\begin{proof}
    Observe that the only vertex in $Q[1,7]$ that is not in $Q[1,7)$ is 7. Since every connected component of $Q[1,7)$ is planar, we prove that adding every edge that connects to 7 does not add a $K_5$ or $K_{3,3}$ minor.

    Observe that the only elements adjacent to $7$ are $1, 7/6, 7/5, 7/4, 7/3, 7/2$. Note that the component of $Q[1,7)$ containing $7/6$ also contains $7/3$ and $7/2$. Further, since that component contains $7/2$, it must also contain $7/4$. To see that it contains $7/5$ as well, consider the path $7/4, 21/4, 21/20, 21/5, 7/5$. Thus, adding the vertex $7$ and all the edges from $7$ to the aforementioned vertices merges at most 2 connected components.

    Note that for every rational $q\in (1,7)$, both $7q,q/7 \not\in [1,7]$, so the only edge in $Q[1,7]$ representing a multiplication or division by 7 is the edge from 1 to 7. Thus any rational in the component of $Q[1,7)$ containing $7/2$ is of the form $7\cdot 2^a3^b5^c$ while every rational in the component of $Q[1,7)$ containing $1$ is of the form $2^a3^b5^c$. Thus in $Q[1,7]$, any path from the component containing $7/2$ to the component containing $1$ must cancel the 7 in the numerator, hence there must be a division by 7. Thus the path must contain the edge from 1 to 7, hence said edge must be a bridge connecting two components, where the component containing 1 is a subgraph of the planar graph $Q[1,7)$ and the component containing 7 is a subgraph of $Q(1,7]$. Note also that $Q(1,7] \cong Q[1,7)$ planar via the graph isomorphism $f:Q(1,7] \to Q[1,7)$ given by $f(r) = 7/r$ for any vertex $r$. Since every edge of $K_5$ and $K_{3,3}$ belongs to a cycle, the component of $Q[1,7]$ containing 1 and 7 (i.e. $Q'[1,7]$) is planar since both components it was formed from were planar, and adding the edge from 1 to 7 does not add any new cycles, hence it cannot have introduced a $K_5$ or $K_{3,3}$ minor. Thus $Q'[1,7]$ is planar, so $Q[1,7]$ is planar.

    Further, since the edge from 1 to 7 is a bridge, choose a 3-coloring of the component of $Q[1,7)$ containing 1 and a 3-coloring of the component of $Q(1,7] \cong Q[1,7)$ containing 7 such that the color of 1 is different to the color of 7. Color all other components according to the coloring of $Q[1,7)$.
\end{proof}

We have proven that for any integer $n$, $G[n,7n]$ is planar by proving that $Q[1,7]$ is planar. We now prove that $7$ is the maximum of all $r\in \R$ such that $G\left[n,\lfloor rn\rfloor\right]$ is planar for every integer $n$. We do so by proving that for any $\epsilon>0$, $Q[1,7+\epsilon)$ is non-planar by showing there is a $K_{3,3}$ minor in $Q[1,7+\epsilon)$ for any $\epsilon>0$, but first we need the following technical lemma.

\begin{lemma}\label{Lem:Irrational_Dense}
    For $\alpha$ a positive irrational number and $\epsilon>0$, there exist $a,b\in \Z_{>0}$ such that $a-b\alpha \in (0, \epsilon)$.
\end{lemma}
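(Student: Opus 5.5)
We will derive this from the classical density of the fractional parts $\{b\alpha\}$, proved by the pigeonhole principle. Throughout, $\{x\}=x-\lfloor x\rfloor$ denotes the fractional part. Fix $\alpha>0$ irrational and $\epsilon>0$. We may assume $\epsilon<1$, since shrinking $\epsilon$ only strengthens the conclusion. Choose an integer $N>1/\epsilon$.

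\textbf{Pigeonhole step.} Consider the $N+1$ numbers $\{b\alpha\}$ for $b=0,1,\ldots,N$. All of them lie in $[0,1)$, and they are pairwise distinct because $\alpha$ is irrational. Partition $[0,1)$ into $N$ intervals of length $1/N<\epsilon$. Then two of these numbers, say for indices $b_1<b_2$, fall in the same interval. Put $b_0=b_2-b_1\ge 1$. The quantity $\delta:=\{b_2\alpha\}-\{b_1\alpha\}$ differs from $b_0\alpha$ by an integer and satisfies $0<|\delta|<\epsilon$.

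\textbf{Sign step.} This is the only real obstacle: we need the difference to lie on the positive side, namely $a-b\alpha>0$, with both $a,b$ positive.

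If $\delta<0$, then $b_0\alpha=m-|\delta|$ for an integer $m$, so $m-b_0\alpha=|\delta|\in(0,\epsilon)$. Since $b_0\alpha>0$, we get $m>b_0\alpha>0$. Take $a=m$ and $b=b_0$.

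If $\delta>0$, then $b_0\alpha=m+\delta$ for an integer $m$, so the fractional part $\theta:=\{b_0\alpha\}=\delta$ lies in $(0,\epsilon)$. It is positive and irrational. Let $k=\lceil 1/\theta\rceil-1$, the largest integer with $k\theta<1$; note $k\ge1$ because $\theta<\epsilon<1$. Then $1-k\theta\in(0,\theta]$, and equality $1-k\theta=\theta$ is impossible because $\theta$ is irrational. Hence $1-k\theta\in(0,\epsilon)$. Now $k\theta=kb_0\alpha-km$, so
\[
(km+1)-kb_0\alpha = 1-k\theta\in(0,\epsilon).
\]
Take $b=kb_0\ge1$ and $a=km+1$. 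Since $a>b\alpha>0$ and $a$ is an integer, $a\ge1$.

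In both cases $a,b\in\Z_{>0}$ and $a-b\alpha\in(0,\epsilon)$, which proves the lemma.
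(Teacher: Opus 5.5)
Your proof is correct and follows essentially the paper's argument: pigeonhole on $N+1$ fractional parts gives a nonzero integer combination $a'-b'\alpha$ of absolute value below $1/N$. In the wrong-sign case you then multiply by a positive integer so the value lands in $(-1,-1+1/N)$ and add $1$, exactly as the paper does. The only difference is that you give the multiplier explicitly as $k=\lceil 1/\theta\rceil-1$, where the paper only asserts that a suitable $M$ exists.
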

\begin{proof}
    Let $N\in \Z$ be such that $1/N < \epsilon$. For $i\in\{1, 2, \ldots, N, N+1\}$ let $a_i$ be the minimal integer such that $a_i - i\alpha > 0$. (Note that the sequence $(a_i : 1\leq i\leq N+1)$ is a non-decreasing sequence.) Thus $\{a_i-i\alpha : 1\leq i\leq N+1\}$ is a set of $N+1$ irrational numbers in the interval $[0,1]$. By the pigeonhole principle, there is an interval $[m/N, (m+1)/N]$ such that there are $i < j$ with $a_i - i\alpha, a_j - j\alpha \in [m/N, (m+1)/N]$. Note that $(a_j-a_i) - (j-i)\alpha \in [-1/N, 1/N]$ for any such $i<j$. Furthermore, as it is irrational, $(a_j-a_i) - (j-i)\alpha \in (-1/N, 0) \cup (0, 1/N)$.
    
    If $(a_j-a_i) - (j-i)\alpha \in (0, 1/N) \subset (0,\epsilon)$, then $a=a_j-a_i > (j-i)\alpha > 0$ and $b=j-i>0$ are the integers proving the result.

    If $(a_j-a_i) - (j-i)\alpha \in (-1/N, 0)$, then there is some positive integer $M$ such that $M[(a_j-a_i) - (j-i)\alpha] \in (-1,-1+1/N)$. Thus $a=M(a_j-a_i)+1 \geq 1$ and $b=M(j-i)$ are the integers proving the result.
\end{proof}

\begin{thm}
    $Q[1,7+\epsilon)$ is non-planar as it contains a $K_{3,3}$ minor.
\end{thm}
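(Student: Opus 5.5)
The plan is to use Lemma~\ref{Lem:Irrational_Dense} to produce several rationals just above $1$ that lie in the $\{2,3,5\}$-component of $1$. Each such $t$ then gives a ``cross edge'' $t$--$7t$ inside $Q[1,7+\epsilon)$. This breaks the bridge structure found in the proof that $Q[1,7]$ is planar, and I expect a $K_{3,3}$ minor to appear once enough cross edges are present.

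\emph{Step 1 (rationals near $1$).} Let $\alpha=\log_2 3$, which is irrational. Set $\delta=\log_2(1+\epsilon/7)$. By Lemma~\ref{Lem:Irrational_Dense} there are $a,b\in\Z_{>0}$ with $a-b\alpha\in(0,\delta)$, so $t=2^a/3^b\in(1,1+\epsilon/7)$. Then both $t$ and $7t$ lie in $[1,7+\epsilon)$, and $t$--$7t$ is an edge. Replacing $\delta$ by $\delta/k$ makes $t,t^2,\dots,t^k$ all lie in $(1,1+\epsilon/7)$, giving as many cross edges $t^i$--$7t^i$ as needed, together with the old edge $1$--$7$.

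\emph{Step 2 (paths inside each side).} I will show that $1$ and $t^i$ are joined by a path inside $[1,7)$ using only the moves $x\mapsto 2x$ and $x\mapsto x/3$. Use the greedy rule: double if $x<7/2$, divide by $3$ otherwise. This rule keeps the walk in $[1,7)$. Since $t^i$ has the form $2^{A}3^{-B}$, I will check by a balancing argument on the exponents that the walk can be arranged to reach $t^i$ exactly; reordering the steps is possible because the lattice region with values in $[1,7)$ is a strip.

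On the other side, the map $r\mapsto 7t^k/r$ is an isomorphism of $Q[1,7t^k]$ onto itself, in the same way $f$ was used earlier. Applying it transports these paths to paths among $7,7t,\dots,7t^k$ inside the $7\cdot\{2,3,5\}$-component, which lies in $(1,7t^k]$. Moreover, the two components are the embedded triangular-lattice pieces described in the proof that $Q[1,7)$ is planar, so each is locally well connected.

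\emph{Step 3 (assembling $K_{3,3}$) --- the main obstacle.} With three cross edges the two lattice pieces only form a prism-like graph, which is planar, so more care is needed. I would try the following concrete choice: four attachment points $1,t,t^2,t^3$.
\begin{enumerate}
\item In the $1$-side, find a cycle $C$ passing through the attachment points in the order $1,t,t^2,t^3$.
\item In the $7$-side, find paths realizing a different cyclic order. The reflection $r\mapsto 7t^3/r$ reverses the lattice geometry, and the $4r$-edges (together with the $5$- and $6$-edges) give chords that can swap the order.
\item From this crossing pattern, contract to obtain $K_{3,3}$: take three branch vertices on $C$ on one side, and the other three formed from the $7$-side paths.
\end{enumerate}
If this fails, the fallback is to mimic the explicit $K_{3,3}$ found in $G[1,15]$. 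That means choosing branch sets $\{1,7\}$, $\{t,7t\}$, $\{t^2,7t^2\}$ on one side of the bipartition and three lattice vertices near $2,3,6$ on the other side, then checking disjointness of the connecting paths directly for $t$ sufficiently close to $1$.
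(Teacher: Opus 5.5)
Your Steps 1 and 2 gather correct ingredients: the cross edge $t$--$7t$ with $t=2^a/3^b\in(1,1+\epsilon/7)$, the automorphism $r\mapsto 7t^k/r$ of $Q[1,7t^k]$, and $\times 2$/$\div 3$ walks. (As written, though, your greedy rule with threshold $7/2$ never returns below $7/6$ after its first step, so it cannot reach $t^i$; it needs modifying, e.g.\ threshold $3$ plus final halvings.) The real gap is Step 3. Exhibiting the minor is the entire content of the theorem, and you give no branch sets and no internally disjoint paths. The mechanism you propose also does not produce the twist you need. Transport a cycle through $1,t,t^2,t^3$ (in that order) by $r\mapsto 7t^3/r$. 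You get a cycle through $7t^3,7t^2,7t,7$, and the cross edges then pair positions $1\leftrightarrow 4$ and $2\leftrightarrow 3$. That is a reflection of the $4$-cycle, so the configuration is just the prism $C_4\times K_2$, which is planar and certifies nothing. The claim that the $4r$-, $5r$-, $6r$-edges ``swap the order'' amounts to a non-planarity-forcing structure inside one side, which you would have to exhibit. Your picture of each side as a locally well connected, disc-like lattice piece with the attachment points on its boundary hides exactly this structure.

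The missing idea is that no crossing pattern among several cross edges is needed. The $\{2,3,5\}$-component of $1$ in $Q[1,7)$ already contains a subdivision of $K_{3,3}$ minus one edge, so one cross edge beyond $1$--$7$ suffices. The paper takes branch vertices $\{1,6,4/3\}$ and $\{2,3,16/3\}$. It uses the edges $1$--$2$, $1$--$3$, $6$--$2$, $6$--$3$, $4/3$--$16/3$, together with the paths $4/3,4,2$; $4/3,20/3,5/3,5,5/4,15/4,15/8,45/8,9/8,9/2,3/2,3$; and $6,6/5,24/5,8/5,16/5,16/15,16/3$. These are internally disjoint and lie in $[1,7)$. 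The ninth branch path, from $16/3$ to $1$, goes through the $7$-side:
\begin{itemize}
\item walk from $16/3$ by $\div 3$ and $\times 2$ steps to some $2^{a_0}/3^{b_0}\in(1,1+\epsilon/7)$;
\item cross to $7\cdot 2^{a_0}/3^{b_0}\in(7,7+\epsilon)$;
\item walk back by $\div 2$ and $\times 3$ steps inside $[1,7+\epsilon)$ to $7$, possibly via $7/3$;
\item finish with the edge $7$--$1$.
\end{itemize}
The first half avoids $4/3$ and never involves the prime $5$, and every vertex of the second half has a factor $7$. So this path meets the other eight only at its endpoints. In other words, the $1$-side together with an edge $1$--$16/3$ is already non-planar, and that rigidity of a single side is what breaks planarity.
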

\begin{proof}
    Without loss of generality, let $\epsilon<1$. We first prove the following fact: Given a rational number $r>1$ and $\epsilon/7>0$, there is a sequence of rational numbers $r=r_0, r_1, \ldots, r_n$ such that
    \begin{enumerate}
        \item $r_i\geq 1$ for all $i$,
        \item $r_{i+1}/r_{i} \in \{2, \frac{1}{3}, \frac{1}{2}\}$ for all $i$,
        \item $r_n \in [1,1+\epsilon/7)$.
    \end{enumerate}
    Observe that $r_0=r$ satisfies the conditions. Assume $r_0, r_1, \ldots, r_i$ satisfy (1) and (2). If $r_i\geq 3$, let $r_{i+1}=r_i/3$, otherwise let $r_{i+1}=2r_i$. (Note that once $r_i\in [1,6]$, all subsequent $r_j\in [1,6]$ as well.) We now prove that there is eventually an $n\in \N$ such that $r_n\in [1,1+\epsilon/7)$ after a slight adjustment of this recursion.

    Let $\alpha = \ln 3/\ln 2 = \log_2(3)$, and note $\alpha$ is irrational. Then by Lemma \ref{Lem:Irrational_Dense}, there are $a,b\in\Z_{>0}$ such that
    \begin{align*}
        a-b\alpha &\in \left( 0, \frac{\log_2(1+\epsilon/7)}{\log_2(r)} \right) \\
        r \frac{2^a}{2^{b\alpha}} &\in (1, 1+\epsilon/7) \\
        r \frac{2^a}{3^b} &\in (1,1+\epsilon/7).
    \end{align*}

    To see that $r2^a/3^b$ can be attained by a sequence, observe that for our recursive definition the exponent of $3$ in the denominator only ever increases, and does so by 1 each time it increases. As such, there is a point in the above recursive definition of a sequence that we reach a term $r2^q/3^b$. If this term is $r2^a/3^b$, we are done. If $a>q$, then we have $1 \leq r 2^q/3^b < r2^a/3^b < 1+\epsilon/7$, and we are done. If $a<q$, we divide by 2 a total of $q-a$ times. Thus there is a sequence as described which eventually terminates in a term in $[1,1+\epsilon/7)$.

    We now prove that $Q[1,7+\epsilon)$ contains the following $K_{3,3}$ minor by explicitly stating the nine paths necessary to construct the minor.
    \begin{center}
        \begin{tikzpicture}[scale=1, every node/.style={circle, draw, minimum size=5mm}]
        \node (1r) at (-2,1) {$1$};
        \node (2r) at (0,1) {$6$};
        \node (3r) at (2,1) {$4/3$};
        \node (5r) at (-2,-1) {$16/3$};
        \node (6r) at (0,-1) {$2$};
        \node (7r) at (2,-1) {$3$};
        
        \draw (1r) -- (5r);
        \draw (1r) -- (6r);
        \draw (1r) -- (7r);
        \draw (2r) -- (5r);
        \draw (2r) -- (6r);
        \draw (2r) -- (7r);
        \draw (3r) -- (5r);
        \draw (3r) -- (6r);
        \draw (3r) -- (7r);
        \end{tikzpicture}
    \end{center}

    The paths from $1$ to $2$, $1$ to $3$, $6$ to $2$, $6$ to $3$, and $4/3$ to $16/3$ are clear as these paths are simply an edge.

    The path from $4/3$ to $2$ is $4/3, 4, 2$.

    The path from $4/3$ to $3$ is $4/3, 20/3, 5/3, 5, 5/4, 15/4, 15/8, 45/8, 9/8, 9/2, 3/2, 3$.

    The path from $6$ to $16/3$ is $6, 6/5, 24/5, 8/5, 16/5, 16/15, 16/3$.

    The only path that remains to be shown is the path from $1$ to $16/3$ which does not overlap with any of the aforementioned paths.

    Beginning at $16/3$, we follow the recursive definition of a sequence beginning at $r_0=16/3$ and ending in the interval $[1,1+\epsilon/7)$ where (except for a few final steps) $r_{i+1} = r_i/3$ if $r_i\geq 3$ and $r_{i+1}=2r_i$ if $r_i<3$. Observe that this sequence begins $16/3, 16/9, 32/9, 32/27, \ldots$ and eventually reaches some term $2^{a_0}/3^{b_0}\in (1,1+\epsilon/7)$ with $a_0, b_0 > 0$. Then $7\cdot 2^{a_0}/3^{b_0} \in (7,7+\epsilon)\subset [1,7+\epsilon)$.

    We now follow the following recursive rule to continue this sequence. Given $r_n$ a rational number in the sequence beginning at or after $7\cdot 2^{a_0}/3^{b_0}$, define
    \[
        r_{n+1} = \begin{cases}
            3r_n & \text{if } r_n \leq 7/3, \\
            r_n/2 & \text{if } r_n > 7/3.
        \end{cases}
    \]
    Note that if $r_n\in [1,7+\epsilon)$, then $r_{n+1}\in [1,7+\epsilon)$. Further note that if $r_n = 7\cdot 2^a/3^b$, then $r_{n+1}$ is either $7\cdot 2^{a-1}/3^b$ or $7\cdot 2^a/3^{b-1}$. Let $q$ be the first rational number obtained in this sequence such that either the exponent of $2$ or the exponent of $3$ is $0$, i.e., either $q=7\cdot 2^\alpha$ or $q=7/3^\beta$.
    
    If $q=7\cdot 2^\alpha \in [1, 7+\epsilon)$, then $\alpha=0$ and the sequence
    \[
    \frac{4}{3}, \frac{8}{3}, \ldots, \frac{2^{a_0}}{3^{b_0}}, \frac{7\cdot 2^{a_0}}{3^{b_0}}, \frac{7\cdot 2^{a_0-1}}{3^{b_0}}, \ldots, q=7
    \]
    is a path from $4/3$ to $7$.

    Similarly, if $q=7/3^\beta \in [1, 7+\epsilon)$, then $\beta=0$ or $\beta=1$. Thus $q\in\{7/3, 7\}$ and the sequence
    \[
    \frac{4}{3}, \frac{8}{3}, \ldots, \frac{2^{a_0}}{3^{b_0}}, \frac{7\cdot 2^{a_0}}{3^{b_0}}, \frac{7\cdot 2^{a_0-1}}{3^{b_0}}, \ldots, \frac{7}{3}, 7, 1
    \]
    is a path from $16/3$ to $1$. Further, every rational number in both of these two sequences has a denominator which is a power of $3$. The only possible overlaps with a previous path are $4/3$, $20/3$, or $5/3$. The latter two have a multiple of $5$ in the numerator, so we are independent from those points, and before we introduce the $7$ in the numerator, the power of $2$ in the numerator only increases, so we never go from $16$ to $4$. Thus this path is independent of all previous paths.

    Thus we have found a $K_{3,3}$ minor in $Q[1,7+\epsilon)$ for an arbitrary $\epsilon>0$, so $Q[1,7+\epsilon)$ is non-planar.
\end{proof}

Thus $G[n,7n]$ is planar, and $7$ is the greatest possible real number $r$ for which we can ensure $G[n,rn]$ is planar.

\begin{remark}
    Up to now, not much has been shown about the planarity of divisor graphs. Some open questions include the following. Are there nice algebraic properties of rings that ensure the full divisor graph of the ring is planar and/or non-planar? Some simple conditions are that if the ring does not have at least 5 elements, then it must be planar.

    What other subsets $S\subseteq \Z$ ensure $G(S)$ is planar? For example, if $P$ is the set of primes in $\Z$, then $G(P)$ is planar. Are there any useful classes of ``planar subsets'' of $\Z$?

    There are a few other questions that are still open regarding the planarity of a given divisor graph $G[n,m]$. We know that for $G[n,m]$ to be planar, we must have $m/n < 15$. However, for a given $n$, we currently do not have a way to calculate the maximum ratio $m/n$ for which $G[n,m]$ is planar besides brute force. From our limited computations, it seems that most of the time it lies close to 7.

    We also suspect but have not proven that if $G[n,m]$ contains a $K_5$ minor, then it also has a $K_{3,3}$ minor; that is, $K_5$ is never the first nonplanar subgraph.

    While we proved that $Q[1,7]$ is 3-colorable, and in fact has chromatic number 3, the chromatic number of $Q[1,7+\epsilon)$ seems to be a more difficult question as there are more paths connecting 1 to 7 than just one. In \cite{al2012further}, they prove that divisor graphs are perfect, which would suggest that the chromatic number remains 3 for $\epsilon<1$, there are no cliques of size 4 in $Q[1,7+\epsilon)$. However, they use the Strong Perfect Graph Theorem (see \cite{chudnovsky2006strong}) which is proven for \emph{finite} graphs, yet $Q[1,7+\epsilon)$ is infinite.
\end{remark}

\bibliographystyle{plain} 
\bibliography{refs} 

\end{document}